\documentclass[11pt]{amsart}
\usepackage{amssymb,latexsym, amsmath, amsxtra}
\usepackage{graphicx}

 \textwidth=6.0in
 \hoffset=-.5in

\newtheorem{theorem}{Theorem}
\newtheorem{lemma}{Lemma}

\begin{document}

\author{Estelle Basor, Brian Conrey, Kent E. Morrison}
\address{American Institute of Mathematics, San Jose, CA}
\title{Knots and ones}
\date{March 2, 2017}
\subjclass[2010]{Primary 57M27; Secondary 05A10,11B65}

\begin{abstract}We give a number theoretic proof of the integrality of certain BPS invariants of knots. The formulas for these numbers are sums involving binomial coefficients and the M\"obius function. We also prove a conjecture about further divisibility properties of the  invariants.
\end{abstract}

\maketitle
\large
\renewcommand{\baselinestretch}{1.1}   
\normalsize

A recent SQuaREs group at AIM asked us if we could prove directly that $n_r$ is always an integer where
$$n_r= r^{-2} \sum_{d\mid r} \mu\left(\frac r d\right) \left(3 d-1 \atop d-1\right).$$
It was already known that the $n_r$ are integers by work of Garoufalidis, Kucharski, 
and Su\l kowski \cite[Proposition 1.2]{GKS2016} in which the sums are identified as the formulas for the extremal BPS invariants of certain twist knots, but the question put to us was whether the integrality could be proved from the formula. In addition, we noticed that often $n_r/r$ was also an integer and that $2n_r/r$ appeared always to be an integer. The observation that $2n_r/r$ is an integer is one case of the same authors' ``Improved Integrality'' conjecture \cite[Conjecture 1.3]{GKS2016}. In this article we prove that and describe exactly when $n_r/r$ is not an integer.

 We show that if $r$ is odd then $n_r/r$ is integral. Empirically  somewhere around $32\%$ of even $r$ 
have $n_r/r$ integral.
For even $r$, the results are a little complicated to state;   
the case that $r=2p$ where $p$ is a prime is fairly easy to describe.
The initial list of primes for which $n_{2p}/(2p)$ is an integer are 
\begin{multline*}\{5, 17, 37, 41, 73, 137, 149, 257, 277, 293, 337, 521, 577, 593,\\ 641, 
661, 673, 677, 1033, 1061,1093,1097 \dots\}
\end{multline*}
 We can prove that  
 $n_{2p}/(2p)$ is integral precisely for  the primes $p$  which have no consecutive ones in their binary 
expansion. 
For example $ 10001001001_2=1097$ is on the list but 
$100011001_2=281$ is not. We say that $r$ is binarily ``well-spaced'' if its binary expansion does not have 
consecutive 1's. More precisely a number 
$$r=\sum_{j=0}^J \epsilon_j 2^j$$
with $\epsilon_j\in \{0,1\}$ is well-spaced if and only if $\epsilon_j \epsilon_{j+1}=0$ for $j=0,1,\dots ,J-1$.

Turning to more general even $r$ we let
$\mathcal W$ be the set of odd integers which are binarily well-spaced
\begin{eqnarray*} &&
\mathcal W=\{1, 5, 9, 17, 21, 33, 37, 41, 65, 69, 73, 81, 85, 129, 133, 137, 145, \\
&&
149, 161, 165, 169, 257, 261, 265, 273, 277, 289, 293, 297, 321, 325, \\
&&
329, 337, 341, 513, 517, 521, 529, 533, 545, 549, 553, 577, 581, 585, \\
&&
593, 597, 641, 645, 649, 657, 661, 673, 677, 681,\dots\}
\end{eqnarray*}
Our basic result is the following.
\begin{theorem}
For any $r$ the ratio 
$\frac{2n_{r}}{ r}$
is an integer. Moreover, 
$\frac{n_{r}}{r}$
is an integer if and only if either $r$ is odd or else 
 $r=2^kr_1$ where $k>0$ and $r_1$ is odd and the set of $w\in \mathcal W$ for which $w\mid r_1$ and $\frac {r_1} w $ is squarefree has even cardinality. 
\end{theorem}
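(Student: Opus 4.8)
The plan is to pass from $n_r$ to its integer numerator and analyze one prime at a time. Since $\binom{3d-1}{d-1}=\tfrac13\binom{3d}{d}$, writing $c(N)=\binom{3N-1}{N-1}$ and $g(r)=r^{2}n_r=\sum_{d\mid r}\mu(r/d)c(d)$, both claims reduce to locating the valuation $v_p(n_r)=v_p(g(r))-2v_p(r)$. Concretely, I must show $v_p(g(r))\ge 3v_p(r)$ for every odd prime $p$, so that the odd part never obstructs, and pin down $v_2(g(r))$ exactly, the assertion being that it is always $\ge 3v_2(r)-1$ with equality governing the failure of $n_r/r\in\mathbb Z$.

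For the M\"obius sum the key simplification is that $\mu(r/d)\neq 0$ forces $r/d$ squarefree, so for a prime $p$ with $v_p(r)=k\ge 1$ only the two top levels $v_p(d)\in\{k-1,k\}$ survive. Writing $r=p^{k}r'$ with $p\nmid r'$ and pairing the two levels gives $g(r)=\sum_{m}\mu(r'/m)\bigl[c(p^{k}m)-c(p^{k-1}m)\bigr]$, the sum over $m\mid r'$ with $r'/m$ squarefree. For odd $p$ I would invoke the Wolstenholme--Jacobsthal congruence $\binom{3p^{k}m}{p^{k}m}\equiv\binom{3p^{k-1}m}{p^{k-1}m}\pmod{p^{3k}}$, which bounds each bracket by $v_p\ge 3k$ and hence gives $v_p(g(r))\ge 3k=3v_p(r)$; the prime $p=3$ needs extra care, since the factor $\tfrac13$ in $c$ costs one power of $3$, so there I would use the sharper $p=3$ form of the congruence.

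The prime $2$ is the heart of the matter. With $r=2^{k}r_1$, $r_1$ odd, the same reduction gives $g(r)=\sum_{m}\mu(r_1/m)\bigl[c(2^{k}m)-c(2^{k-1}m)\bigr]$ over odd $m\mid r_1$ with $r_1/m$ squarefree. Kummer's theorem identifies $v_2(c(N))=v_2\binom{3N}{N}$ with the number of carries in the base-$2$ addition $N+2N$; as $2N$ is $N$ shifted by one place, a carry occurs precisely at a pair of consecutive $1$'s, so $v_2(c(N))=0$ exactly when the odd part of $N$ is well-spaced, that is, lies in $\mathcal W$. The decisive input, and the step I expect to be the main obstacle, is the exact $2$-adic Jacobsthal estimate
\[
v_2\bigl(c(2^{k}m)-c(2^{k-1}m)\bigr)=v_2(c(m))+3k-1,
\]
so that for well-spaced $m$ the difference has valuation exactly $3k-1$, while for every non-well-spaced $m$ it is $\ge 3k$. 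Establishing this off-by-one, the congruence at $2$ being one power weaker than at the odd primes, which is the ultimate reason the factor $2$ is forced, requires controlling $\binom{3\cdot 2^{k}m}{2^{k}m}$ modulo a high power of $2$, presumably by induction on $k$ built on a careful Legendre/Kummer count together with a Jacobsthal-type lifting lemma.

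Granting this, the conclusion is immediate by reduction modulo $2^{3k}$. Each term with $m$ well-spaced equals $\pm 2^{3k-1}\cdot(\text{odd})\equiv 2^{3k-1}\pmod{2^{3k}}$, the sign coming from $\mu$ being irrelevant since $-2^{3k-1}\equiv 2^{3k-1}$, whereas the remaining terms vanish modulo $2^{3k}$. Hence $g(r)\equiv |\mathcal W_r|\,2^{3k-1}\pmod{2^{3k}}$, where $\mathcal W_r=\{w\in\mathcal W: w\mid r_1,\ r_1/w\ \text{squarefree}\}$. Thus $v_2(g(r))=3k-1$ when $|\mathcal W_r|$ is odd and $v_2(g(r))\ge 3k$ when it is even, that is $v_2(n_r)=k-1$ or $v_2(n_r)\ge k$. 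In either case $v_2(n_r)\ge k-1=v_2(r)-1$, which together with the odd-prime bounds yields $2n_r/r\in\mathbb Z$, and $n_r/r\in\mathbb Z$ precisely when $|\mathcal W_r|$ is even, giving the stated criterion (the odd-$r$ case is $k=0$ and is covered by the odd-prime step alone).
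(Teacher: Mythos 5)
Your skeleton is in fact the same as the paper's: the M\"obius factor kills all but the top two $p$-levels of each divisor sum, odd primes are disposed of by a congruence $\bmod\ p^{3k}$ between consecutive levels, and at $p=2$ everything hinges on each bracket being \emph{exactly} $2^{3k-1}$ times a unit times $c(2^{k-1}m)$, with digit counting converting oddness of $c$ into well-spacedness (your Kummer carry count is a tidy equivalent of the paper's Legendre computation $v_2\binom{3d-1}{d-1}=b(d-1)+b(d)-b(3d-1)$ in Lemma \ref{lem:basic}, including the observation that doubling does not change the carry pattern). But there is a genuine gap, and you name it yourself: the exact estimate $v_2\bigl(c(2^km)-c(2^{k-1}m)\bigr)=v_2(c(m))+3k-1$ is introduced with ``Granting this'' and never proved. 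It is not an off-the-shelf citation: Jacobsthal--Kazandzidis-type congruences hold for $p\ge 5$ and degrade precisely at the small primes, and you need \emph{exactness}, not merely a lower bound, because the ``only if'' half of the theorem (non-integrality of $n_r/r$ when the well-spaced count is odd) requires the valuation $3k-1$ to be attained. The paper supplies exactly this in its ``Twos'' section: it writes the bracket as $c(2^{k-1}d)\bigl(\prod_{j\le 2^kd,\,2\nmid j}(x+j)\big/\prod_{j\le 2^kd,\,2\nmid j}j-1\bigr)$ with $x=2^{k+1}d$, extracts $2^{2k}$ from the linear and quadratic terms, and shows the residual combination $-d^2\sum_{2\nmid j\le 2^kd}j^2+4d^2\sum_{i<j,\,2\nmid ij}ij$ is $\equiv 2^{k-1}\bmod 2^k$ for odd $d$ --- that exact $2^{k-1}$ is the entire content of the theorem's biconditional, and your proposal contains no argument for it, only the (correct) diagnosis that it is the main obstacle.

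A second, smaller soft spot is $p=3$. You propose to quote ``the sharper $p=3$ form'' of the congruence, but the standard Kazandzidis statement genuinely fails at $p=3$ (e.g. $\binom{6}{3}-\binom{2}{1}=18$ has $v_3=2$, not $\ge 3$), and since $c(N)=\tfrac13\binom{3N}{N}$ you need one power of $3$ beyond $3^{3k}$ on the central binomials, so the citation must be chosen and checked with care. The needed statement is true, and the paper proves it directly: its elementary symmetric-function argument works uniformly for all odd $p$, the only $3$-specific input being $\tfrac12\equiv -1 \bmod 3$. In summary, the reduction, the criterion, and the digit-counting all match the paper and are sound; but the two arithmetic engines --- the odd-$p$ congruence and, above all, the exact $2$-adic estimate --- are asserted rather than established, so as it stands this is a correct plan for a proof, not a proof.
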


\section{Opening arguments}
Suppose that the exact power of $p$ which divides $r$ is $p^k$, i.e. $p^k\mid r$ and 
$p ^{k+1}\nmid r$. Then we can write $r=p^kr_1$ where $p\nmid r_1$ and
\begin{eqnarray} \label{eqn:open}
r^2 n_r = \sum_{d\mid r_1}  
 \mu\big( \frac {r_1}{d}\big)\left(\left( 3p^{k}d-1 \atop p^{k}d-1\right)-\left( 3p^{k-1}
d-1 \atop p^{k-1}d-1
\right)
\right).
\end{eqnarray}
The expression in $(\dots)$ may be written as 
\begin{eqnarray*}
\left( 3p^{k-1}
d-1 \atop p^{k-1}d-1
\right) \times \left(\frac{\prod_{j\le dp^k\atop p\nmid j}(x+j)}{\prod_{j\le dp^k \atop p\nmid j}j}-1\right)
\end{eqnarray*}
where $x=2p^kd$.  It suffices to show that
\begin{eqnarray} \label{eqn:cong}
\frac{\prod_{j\le dp^k\atop p\nmid j}(x+j)}{\prod_{j\le dp^k \atop p\nmid j}j}\equiv 1 \bmod p^{3k}.
\end{eqnarray}
The coefficient of $x$ on the left hand side is 
\begin{eqnarray} \label{eqn:x}
\sum_{j\le dp^k\atop p\nmid j} \frac 1j
\end{eqnarray}
and the coefficient of $x^2$ is 
\begin{eqnarray} \label{eqn:x2}
\sum_{1\le i<j\le dp^k\atop p\nmid ij}
\frac{1}{ij}.
\end{eqnarray}
At this point we consider odd $p$ versus $p=2$.

\section{Integrality of $n_r/r$ for odd $r$}

Assume in this section that $p$ is odd.  We have 
\begin{eqnarray*}\sum_{j\le dp^k\atop p\nmid j} \frac 1j
=\sum_{j < dp^k/2\atop p\nmid j}\left(\frac{1}{j}+\frac{1}{dp^k-j}\right)
=dp^k \sum_{j < dp^k/2\atop p\nmid j} \frac{1}{j(dp^k-j)} .
\end{eqnarray*}
Since $p^k\mid x$ it remains only to show that 
\begin{eqnarray*}
\sum_{j < dp^k/2\atop p\nmid j} \frac{1}{j(dp^k-j)} 
+2\sum_{1\le i<j\le dp^k\atop p\nmid ij}
\frac{1}{ij}
\equiv 0 \bmod p^k,
\end{eqnarray*}
which is equivalent to 
\begin{eqnarray*}
\sum_{j < dp^k/2\atop p\nmid j} \frac{1}{j^2} 
\equiv 2\sum_{1\le i<j\le dp^k\atop p\nmid ij}
\frac{1}{ij}
  \bmod p^k.
\end{eqnarray*}
But
\begin{eqnarray*}
\sum_{j < dp^k/2\atop p\nmid j} \frac{1}{j^2 } \equiv \frac 12  
\sum_{j < dp^k\atop p\nmid j} \frac{1}{j^2 } \equiv \frac 1 2 
\sum_{j < dp^k\atop p\nmid j} j^2\bmod p^k.
\end{eqnarray*}
 And
 \begin{eqnarray*}
2\sum_{1\le i<j\le dp^k\atop p\nmid ij}
\frac{1}{ij}\equiv 2 \sum_{1\le i<j\le dp^k\atop p\nmid ij}
ij  \equiv  \bigg(\sum_{1\le i \le dp^k\atop p\nmid i}
i\bigg)^2- \sum_{1\le i \le dp^k\atop p\nmid i}
i^2 \bmod p^k.
\end{eqnarray*}
 
 But 
 \begin{eqnarray*}
 \sum_{1\le i \le dp^k\atop p\nmid i}
i \equiv 0 \bmod p^k
\end{eqnarray*}
so it remains to show that 
\begin{eqnarray*}
\frac 1 2 
\sum_{j < dp^k\atop p\nmid j} j^2\equiv 
- \sum_{1\le i \le dp^k\atop p\nmid i}
i^2 \bmod p^k.
\end{eqnarray*}
This is obvious if $p>3$  just by summing. If $p=3$ then we need to use the fact that
$$\frac 1 2 \equiv -1 \bmod 3.$$

It follows that $n_r/r$ is an integer for any odd $r$.

\section{Twos}
We now turn to the divisibility of $n_r$ by powers of 2. We begin by showing that $2n_r/r$ is an integer.
Let $r=2^kr_1$ where $r_1$ is odd. We have
\begin{eqnarray} \label{eqn:open2}
r^2 n_r &=& \sum_{d\mid r_1}  
 \mu\big( \frac {r_1}{d}\big)\left(\left( 3\cdot 2^{k}d-1 \atop 2^{k}d-1\right)-\left( 3\cdot 2^{k-1}
d-1 \atop 2^{k-1}d-1
\right)
\right).
\end{eqnarray}
The expression in $(\dots)$ may be written as 
\begin{eqnarray*}
\left( 3\cdot 2^{k-1}
d-1 \atop 2^{k-1}d-1
\right) \times \left(\frac{\prod_{j\le d\cdot 2^k\atop 2\nmid j}(x+j)}{\prod_{j\le dp^k \atop 2\nmid j}j}-1\right)
\end{eqnarray*}
where $x=2^{k+1}d$. 
The quantity in $(\dots )$ is 
\begin{eqnarray} \label{eqn:x}
x\sum_{j\le d2^k\atop 2\nmid j} \frac 1j
+x^2 \sum_{1\le i<j\le d2^k\atop 2\nmid ij}
\frac{1}{ij} +O(x^3)
\end{eqnarray}
Now
\begin{eqnarray*}
\sum_{j\le d2^k\atop 2\nmid j} \frac 1j=\sum_{j\le d2^{k-1}\atop 2\nmid j} \bigg( \frac 1j +\frac{1}{2^kd-j}\bigg)
=2^kd\sum_{j\le d2^{k-1}\atop 2\nmid j} \frac 1{j(2^kd-j)}
\end{eqnarray*}
and
\begin{eqnarray*}
2\sum_{j\le d2^{k-1}\atop 2\nmid j} \frac 1{j(2^kd-j)}\equiv -
\sum_{j\le d2^{k}\atop 2\nmid j} \frac 1{j^2}\bmod 2^k
\end{eqnarray*}
Therefore (\ref{eqn:x}) is $2^{2k}$ times a quantity which is 
\begin{eqnarray*} 
\equiv \bigg(-d^2\sum_{j\le d2^k\atop 2\nmid j} \frac 1{j^2}
+4d^2\sum_{1\le i<j\le d2^k\atop 2\nmid ij}
\frac{1}{ij} \bigg)\bmod 2^{k}
\end{eqnarray*}
 Replacing $i$ and $j$ by $1/i$ and $1/j$ 
the above is
\begin{eqnarray*} \equiv \bigg(-d^2\sum_{j\le d2^k\atop 2\nmid j}j^2
+4d^2\sum_{1\le i<j\le d2^k\atop 2\nmid ij}
ij\bigg)\bmod 2^{k}
\end{eqnarray*}
Since $d$ is odd, it is easily checked that this is
$$\equiv 2^{k-1} \bmod 2^k.$$
  Thus we have established that $2n_r/r$ is always integral.

\section{When is the factor of 2 actually needed?}
To begin to analyze this question, we introduce $b(t)$ which is the sum of the binary digits of $t$. In other 
words $b(t)$ is the number of ones in the expression of $t$ in base 2. 
Then we can prove:
{\it If $p$ is prime then $n_{2p}/(2p)$ is an integer if and only if 
$$2 b(p) =b(3p-1)+1.$$
Moreover the only primes $p$ which satisfy this condition are those that are well-spaced. }

\subsection{Sketch of proof.}
To study the integrality of $n_{2p}/(2p)$ we have to  consider
\begin{eqnarray*}
1-5-\left(3p-1\atop p-1\right) +\left( 6p-1\atop 2p-1\right)
\end{eqnarray*}
modulo $8$ and modulo $p^3$.  Let's consider it modulo 8. What is the exponent  of 2 in the prime factorization of each of these? 
Recall that the power of 2 that divides $t!$ is $t-b(t)$. Therefore the power of 2 which divides 
$$ \left(3p-1\atop p-1\right)$$
is 
$$b(p-1)+b(2p)-b(3p-1)=b(p)-1+b(p) -b(3p-1)=2b(p)-1-b(3p-1) $$
since for any odd prime it is easy to show that $b(p)=b(p-1)+1$.
Similarly, the exponent of 2 in 
$$ \left( 6p-1\atop 2p-1\right)  $$
is
\begin{eqnarray*}
b(2p-1)+b(4p)-b(6p-1)&=&b(2(p-1)+1)+b(p)-b(2(3p-1)+1)\\&=&b(p-1)+1 +b(p)-b(3p-1)-1\\&=& 2b(p)-1-b(3p-1)
\end{eqnarray*}
which is the same power of 2 as for $({3p-1\atop p-1})$.

 Now it follows from (\ref{eqn:b3d1}) and  (\ref{eqn:b3d2}) below  that for an odd prime $p$ it is never the case that $2b(p)-1-b(3p-1)=1$. 
 Using this, then the only case in
question is when $2b(p)-1-b(3p-1)=0$. In this case it always holds that 
$-({{3p-1}\atop {p-1}}) + ({{6p-1}\atop {2p-1}})\equiv 4 \bmod 8$.
This leads to the proof that 
$n_{2p}/{2p}$ is integral precisely when $2b(p)=b(3p-1)+1$.

\section{A basic lemma}
In this section we prove that for any $d$ the exact power of 2 dividing $({3d-1\atop d-1})$ is the same as that dividing
$({6d-1\atop 2d-1})$; moreover this exact power is never 1 and it is 0 precisely when the binary digits of $d$ are well spaced.

\begin{lemma} \label{lem:basic}
$$
\left(3d-1 \atop d-1\right) \ne \left( 6d-1 \atop 2d-1\right) \bmod 8$$
if and only if $d$ is odd and the binary digits of $d$ are well-spaced.
\end{lemma}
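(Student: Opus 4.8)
The plan is to reduce the whole statement to a single $2$-adic valuation computation built on the ratio identity from the ``Twos'' section. Specializing the expansion used there to $k=1$ gives
$$\binom{6d-1}{2d-1}-\binom{3d-1}{d-1}=\binom{3d-1}{d-1}\,(R-1),\qquad R:=\prod_{\substack{j\le 2d\\ 2\nmid j}}\frac{4d+j}{j}.$$
Since each factor $\tfrac{4d+j}{j}$ is a quotient of two odd numbers, $R$ is a $2$-adic unit and may be analyzed modulo powers of $2$ factor by factor. Writing $v:=v_2\binom{3d-1}{d-1}$ for the $2$-adic valuation, the relevant quantity is
$$v_2\!\left(\binom{6d-1}{2d-1}-\binom{3d-1}{d-1}\right)=v+v_2(R-1),$$
and the lemma is exactly the assertion that this valuation is $\le 2$ precisely when $d$ is odd and well-spaced.

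First I would compute $v_2(R-1)$. For odd $j$ one has $\tfrac{4d+j}{j}\equiv 1+4d\,j^{-1}\equiv 1+4d\pmod 8$, the last step because $4d(j^{-1}-1)$ is divisible by $8$. Hence every factor is $\equiv 1+4d\pmod 8$, and there are exactly $d$ odd $j$ in $[1,2d]$. If $d$ is even then $4d\equiv 0\pmod 8$, so $R\equiv 1\pmod 8$ and $v_2(R-1)\ge 3$; if $d$ is odd then each factor is $\equiv 5\pmod 8$, so $R\equiv 5^{\,d}\equiv 5\pmod 8$, giving $R-1\equiv 4\pmod 8$ and $v_2(R-1)=2$.

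Next I would pin down $v$ for odd $d$ via Kummer's theorem: $v$ equals the number of carries when $d-1$ and $2d$ are added in base $2$. For odd $d=\sum_j\epsilon_j2^j$ with $\epsilon_0=1$, the bits of $d-1$ are $\epsilon_i$ for $i\ge 1$ and the bits of $2d$ are $\epsilon_{i-1}$ for $i\ge 1$, so the two summands carry a $1$ simultaneously at position $i$ exactly when $\epsilon_i\epsilon_{i-1}=1$. Thus there are no carries, i.e.\ $v=0$, if and only if $d$ has no two consecutive binary $1$'s, which is the well-spaced condition.

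Assembling these: if $d$ is even then $v_2$ of the difference is $\ge v+3\ge 3$, so the two binomials agree modulo $8$; if $d$ is odd then $v_2$ of the difference equals $v+2$, which is $\le 2$ precisely when $v=0$, i.e.\ precisely when $d$ is well-spaced. Both implications of the equivalence follow. I expect the main obstacle to be the carry bookkeeping in the Kummer step—reading off the binary digits of $d-1$ and $2d$ and confirming that well-spacedness is exactly the total absence of carries; the mod-$8$ evaluation of $R$ is routine once $R$ is recognized as a $2$-adic unit. A pleasant feature of this route is that it never needs the separate assertion that the common valuation is never $1$.
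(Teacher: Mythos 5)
Your proof is correct, and it takes a genuinely different route from the paper's. The paper computes the exact power of $2$ in each binomial coefficient separately via Legendre's formula $v_2(t!)=t-b(t)$, observes that both valuations equal $b(d-1)+b(d)-b(3d-1)$, and then runs a digit-sum analysis: $b(3d)=2b(d)$ for well-spaced $d$ (no carries in multiplication by $11_2$) versus $b(3d)<2b(d)-1$ otherwise; to finish, it must additionally assert that the common valuation is never $1$ and that when both binomials are odd they disagree mod $8$ --- steps the paper's written proof leaves quite compressed. You instead factor the difference multiplicatively as $\binom{3d-1}{d-1}(R-1)$ with $R=\prod_{j\le 2d,\,2\nmid j}\frac{4d+j}{j}$, which is the $k=1$ specialization of the product expansion in the paper's ``Twos'' section, evaluate $R\bmod 8$ exactly ($R\equiv 1$ for even $d$, $R\equiv 5^d\equiv 5$ for odd $d$), and obtain $v=v_2\binom{3d-1}{d-1}$ from Kummer's theorem applied to the addition $(d-1)+2d$. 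Your route buys two things: since $v_2(R-1)=2$ exactly for odd $d$ and $\ge 3$ for even $d$, the troublesome case distinctions (valuation never $1$; comparing two odd binomials mod $8$) evaporate, as all cases are settled by the single formula $v_2$ of the difference $=v+v_2(R-1)$; and you get the sharper conclusion that for odd well-spaced $d$ the difference is exactly $\equiv 4\bmod 8$, which is precisely the refinement the paper invokes later in the section on $r$ twice an odd number. What the paper's approach buys in exchange is the explicit digit-sum formula for the valuation, which directly feeds the criterion $2b(p)=b(3p-1)+1$ in the $n_{2p}/(2p)$ discussion. One point you should make explicit in the Kummer step: carries propagate, so the number of positions with $\epsilon_i\epsilon_{i-1}=1$ need not equal the total carry count; but your argument only needs the equivalence ``zero carries iff no position where both addends have a $1$,'' which holds because the lowest carry must be generated at a position where both bits of $d-1$ and $2d$ are $1$, and for odd $d$ those bits are $\epsilon_i$ and $\epsilon_{i-1}$ respectively.
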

\begin{proof}
The proof has several parts. First we prove that the exact power of 2 dividing each of the above binomial coefficients 
is the same. Then we prove that that exact power is never 1. Then we prove that in the case that 
both are odd, then they are inequivalent mod 8. Finally, we show that it is precisely the odd $d$ with well spaced binary digits which
give odd values of the two binomial coefficients. 

Recall that the power of 2 that divides $t!$ is $t-b(t)$. Therefore the power of 2 which divides 
$$ \left(3d-1\atop d-1\right)$$
is 
$$b(d-1)+b(2d)-b(3d-1) =b(d-1)+b(d)-b(3d-1) $$
since for any $d$ it is clear that $b(2d)=b(d)$.
Similarly, the exponent of 2 in 
$$ \left( 6d-1\atop 2d-1\right)  $$
is
\begin{eqnarray*}
b(2d-1)+b(4d)-b(6d-1)&=&b(2(d-1)+1)+b(d)-b(2(3d-1)+1)\\&=&b(d-1)+1 +b(d)-b(3d-1)-1\\&=& b(d-1)+b(d)-b(3d-1)
\end{eqnarray*}
which is the same power of 2 as for $({3d-1\atop d-1})$.

Now consider the situation where $d$ is odd and has a well-spaced binary
expansion. Then $d-1$ is even and so 
$$b(d-1)+1 =b(d).$$
Also, $d-1$ has well spaced binary digits, too, since its expansion is the same as that 
of $d$ but with the final 1 replaced by a 0. Now, multiplication of a well spaced number by $3=11_2$
results is a number with twice the binary digit sum, because there is no carrying. In other words, for 
any well-spaced $d$ it is the case that 
\begin{eqnarray*} \label{eqn:b3d} b(3d)=2 b(d).
\end{eqnarray*}
Now we are assuming that $d$ is odd; therefore $3d$ is odd so that 
\begin{eqnarray} \label{eqn:b3d1} b(3d-1)=b(3d)-1=2b(d)-1.
\end{eqnarray}
Therefore, 
$$b(3 d-1)=b(d-1)+b(d)$$
as claimed. 

Now suppose that $d$ is odd but its binary expansion is not well-spaced.
If $d$ is not well-spaced, then 
\begin{eqnarray} \label{eqn:b3d2} b(3d)<2b(d)-1.\end{eqnarray}
 To see this imagine the base 2 multiplication:
$$(1\dots 011\dots 1 0 \dots 1)_2\times (11)_2$$
We have isolated a block of 1's with a zero at the left.  
Therefore,
$$b(3d-1)=b(3d)-1< 2b(d)-1 = b(d)+b(d-1).$$\end{proof}

 \section{The case where  $r$ is twice an odd number}
In this case $n_r/r$ will be integral if and only if 8 divides
\begin{eqnarray*}
\sum_{d\mid \frac{r}{2}}  
 \mu\big( \frac {r}{2d}\big)\left(\left( 6d-1 \atop2d-1\right)-\left( 3 
d-1 \atop d-1
\right)
\right).
\end{eqnarray*}
Now for each  $d\mid \frac r 2$ for which $\frac{r}{d}$ is squarefree and $d$ is binarily well-spaced, by Lemma \ref{lem:basic} we get a contribution 
of $4 \bmod 8$ to the sum above. Consequently the sum will be 0 modulo 8 precisely when there are an even number of such terms. 

\section{Final arguments}
Now suppose that $r=2^kr_1$ where $r_1$ is odd. Then by (\ref{eqn:open}) $n_r/r$ is integral precisely when 
\begin{eqnarray*}
 \sum_{d\mid r_1}  
 \mu\big( \frac {r_1}{d}\big)\left(\left( 3\cdot 2^{k}d-1 \atop 2^{k}d-1\right)-\left( 3\cdot 2^{k-1}
d-1 \atop 2^{k-1}d-1
\right)
\right)\equiv 0 \bmod 2^{3k}
\end{eqnarray*}
By our work in the section on twos this is 
\begin{eqnarray*}
\equiv 2^{3k-1}\sum_{d\mid r_1}  
 \mu\big( \frac {r_1}{d}\big)\left( 3\cdot 2^{k-1}d-1 \atop 2^{k-1}d-1\right)\bmod 2^{3k}.
\end{eqnarray*}
The terms for which 
\begin{eqnarray*}
\left( 3\cdot 2^{k-1}d-1 \atop 2^{k-1}d-1\right)
\end{eqnarray*}
is even will end up contributing $0\bmod 2^{3k}$, so we just need to account for the terms when this is odd. 
By Lemma \ref{lem:basic} these will be the terms for which $2^{k-1}d$ is binarily well-spaced. Clearly these are the terms for which 
$d$ is binarily well-spaced. Hence our theorem is proven.

\end{document}